\numberwithin{equation}{section}
\DeclareFontFamily{OT1}{rsfs}{}
\DeclareFontShape{OT1}{rsfs}{n}{it}{<-> rsfs10}{}
\DeclareMathAlphabet{\mathscr}{OT1}{rsfs}{n}{it}
\theoremstyle{plain}
\newtheorem{theorem}{Theorem}[section]
\newtheorem{lemma}[theorem]{Lemma}
\newtheorem{corollary}[theorem]{Corollary}
\theoremstyle{definition}
\newcommand{\abs}[1]{\left\vert#1\right\vert}
\newcommand{\norm}[1]{\Vert#1\Vert}
\newcommand\R{\mathbb{R}}
\def \supp {\,{\rm supp}\,}
\begin{document}
	\title[Estimating oscillatory integrals using broad-narrow method]{A sharp decay estimate for degnerate oscillatory integral operators using broad-narrow method}
	
	\author{Shaozhen Xu}
	\address{School of Information Engineering, Nanjing Xiaozhuang University, China, 211171}
	\email{xushaozhen@njxzc.edu.cn}

	\subjclass[2010]{42B20 47G10}
	
	\begin{abstract}
	 	We use broad-narrow method to estabish the sharp $L^4$ decay estimate for a class of degenerate oscillatory integral operators in $(2+1)$ dimensions. Especially, the model phase function is 
	 	\[xt^2+y^2t,\]
	a cubic homogeneous polynomial which is degenerate in the sense of \cite{tang2006decay}.
	\end{abstract}
	\keywords{}
	\maketitle
	\tableofcontents
	
	\section{Introduction}
	It is always a mystery to determine the decay rate for integrals with integrands containing oscillatory elements. For the scalar oscillatory integral
	\begin{equation*}
		I(\lambda)=\int_{\R^d}e^{i\lambda S(x)}\psi(x)dx,
	\end{equation*}
	we are interested in determing the optimal decay rate whenever the phase function satisfies certain assumptions. Stationary phase method tells that if the critical points of $S(x)$ are nondegenerate, then we have the sharp decay estimate 
	\begin{equation*}
		I(\lambda)\leq C\lambda^{-\frac{d}{2}}.
	\end{equation*}
	It should be pointed out that the implicit constant $C$ depends on the support function $\psi$ even the phase function $S$. It strongly demonstrate the "local" characteristic constrast to the "global" van der Corput-type estimates. Readers may refer to \cite{basu2021stationary} and \cite{gilula2018some} for more comprehensive contents. It is natural that we can further consider the oscillatory integrals with degenerate phases, if the phase function is real-analytic, Arnold posed the hypothesis that the sharp decay rate is determined by the Newton distance of the phase. This was confirmed by \cite{varchenko1976newton} in which the phase functions are assumed to satisfy certain nonsingular conditions.
	
	As was generalized to the operator setting, we call the operators of the form 
	\begin{equation*}
		T_\lambda f(x):=\int_{\R^{d_Y}}e^{i\lambda S(x,y)}\psi(x,y)f(y)dy, \quad (x,y)\in \R^{d_X}\times\R^{d_Y}		
	\end{equation*}
as $(d_X+d_Y)-$\emph{dimensional} operators. If the phase function $S$ is nondegenerate in the sense that the Hessian does not vanish on the support of $\psi$, then H\"{o}rmander's lemma \cite{hormander1973oscillatory} gives the optimal decay estimate whenver $d_X=d_Y=d$. The model phase is $S(x,y)=x\cdot y$ which corresponds to Fourier transform. In fact, there is another geometric understanding for H\"{o}rmander's lemma. Based on the phase function, we can write the canonical relation 
\[C_S:=\{(x, \xi; y, \eta)\}=\{(x,\nabla_xS; y, -\nabla_yS)\}\subset T^{*}\R^d\times T^{*}\R^d,\]
and view this as a Lagrangian submanifold when endowed with a symplectic form $dx\wedge d\xi+dy\wedge d\eta$. Define the left and right projection mapping respectively as
\[\pi_L:C_S\longrightarrow (x,\xi), \quad \pi_R:C_S\longrightarrow (y,\eta).\] In the language of geometry, H\"{o}rmander's lemma says that if both $\pi_L$ and $\pi_L$ are local diffeomorphisms, then $\|T_\lambda\|_{L^2\to L^2}$ has the optimal decay estimate $\lambda^{-\frac{d}{2}}$. Obviously, the singular types of the mappings $\pi_L$ or $\pi_R$ influence the decay rates of $T_\lambda$. Further researches in this direction, see \cite{pan1990oscillatory} \cite{greenleaf1994fourier}\cite{greenleaf1998fourier}\cite{greenleaf1999oscillatory}\cite{greenleaf2002oscillatory}\cite{comech1997integral}\cite{comech2000integral}.

However, the application of this geometric method is limited by the requirement $d_X=d_Y$ and a frustrated fact is that even the phase function is a simple homogeneous polynomial, the singularities of the corresponding mappings $\pi_L$ and $\pi_R$ are complicated. One way out of the geometric constraints is to focus on the analytic properties, this leads to the thorough understanding of $(1+1)-$dimensional operators in the works on $L^2$ mapping properties \cite{phong1992oscillatory}\cite{phong1994models}\cite{phong1997newton}\cite{rychkov2001sharp}\cite{greenblatt2005sharp} and $L^p$ mapping properties \cite{phong2001multilinear}\cite{yang2004sharp}\cite{shi2017sharp}\cite{xiao2017endpoint}\cite{shi2019damping}. 

In $(2+1)-$dimensional case, Tang\cite{tang2006decay} obtained the (nearly) sharp $L^2$ decay estimates for operators with nondegenerate homogeneous polynomial phases. Greenleaf-Pramanik-Tang \cite{greenleaf2007oscillatory} extend them to more higher dimensional cases. Fairly speaking, the general higher dimensional cases are little understood. When $d_X\neq d_Y$, researches about $T_\lambda$ are largely motivated by Fourier restriction (or extension) phenomenon initially raised by Stein in 1960's. For instance, if the underlying geometric object is codimension-$1$ sphere or paraboloid, it can be generalized to H\"{o}rmander-type oscillatory integral operators, progresses can be found in \cite{hormander1973oscillatory}\cite{bourgain1991p}\cite{bourgain2011bounds}\cite{guth2019sharp}; if the underlying geometric object are space curves, for instance see \cite{drury1985restrictions}, the generalization to corresponding oscillatory integral operators can be found in \cite{bak2004estimates}\cite{bak2009restriction}. Since the the tools of restriction estimates are fruitful, then we aim to explore the possibility of applying some of these tools to establish sharp $L^2$ decay estimates for degenerate oscillatory integral operators. 

Here, we consider the following $(2+1)-$dimensional operators
\begin{equation*}
	T_\lambda^\psi f(x,y)=\int_{\R}e^{i\lambda(x^2t+yt^2)}\psi(x,y,t)f(t)dt.
\end{equation*}
Although the phase function is a simple cubic homogeneous polynomial, it does not saisfy the Carleson-Sj\"{o}lin condition and is also outsdie scope of \cite{tang2006decay}. We use the broad-narrow method of Bourgain-Guth \cite{bourgain2011bounds} and the classical bilinear method dating back to Fefferman \cite{fefferman1970inequalities} to give the following theorem.
\begin{theorem}\label{Main-Thm}
	For the operator $T_\lambda^\psi$, we have the sharp $L^4$ decay estimate 
	\begin{equation}
		\|T_\lambda^\psi f\|_{L^4(\R^2)}\lesssim C_\psi\lambda^{-\frac{3}{8}} \|f\|_{L^4(\R)}.
	\end{equation}
\end{theorem}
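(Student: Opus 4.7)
The plan is to combine the broad-narrow iteration of Bourgain--Guth with a Fefferman-type bilinear $L^2$ estimate (write $T_\lambda := T_\lambda^\psi$ for brevity). First I would carry out dyadic reductions: the phase $\Phi(x,y,t)=x^2t+yt^2$ is invariant under the anisotropic scaling $(x,y,t)\mapsto(\sigma x,\sigma^4 y,\sigma^{-2}t)$, so decomposing the $t$-support of $\psi$ into dyadic annuli $t\sim 2^{-j}$ and rescaling reduces the problem to the model case $t\sim 1$, with the corresponding $L^4$ bounds summing geometrically in $j$.

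The main analytic input is the bilinear estimate
\[
\|T_\lambda f_{\alpha_1}\cdot T_\lambda f_{\alpha_2}\|_{L^2(\R^2)} \lesssim \lambda^{-3/4}\|f_{\alpha_1}\|_{L^2}\|f_{\alpha_2}\|_{L^2}
\]
for intervals $\alpha_1,\alpha_2\subset[1/2,1]$ separated by at least $K^{-1}$. I would prove this by the change of variables $(t_1,t_2)\mapsto(u,v)=(t_1+t_2,\,t_1^2+t_2^2)$, which expresses the product as an oscillatory integral in $(u,v)$ with phase $\lambda(x^2u+yv)$. Plancherel in $y$ produces a factor $\lambda^{-1/2}$; the remaining $x$-integral, treated via the substitution $\xi=\lambda x^2$, contributes a further $\lambda^{-1/2}$ together with a Jacobian weight $|\xi|^{-1/2}$ that is absorbed by the one-dimensional Hardy--Littlewood--Sobolev inequality $\||D_u|^{-1/4}\|_{L^{4/3}\to L^2}$. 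With this estimate in hand, the standard Bourgain--Guth pointwise dichotomy applied to $f=\sum_\alpha f_\alpha$ at scale $K^{-1}$ splits each output point $(x,y)$ into a broad case, in which two transverse pieces both contribute at least $K^{-1}|T_\lambda f(x,y)|$ and whose $L^4$ contribution is bounded by $CK^{4}\lambda^{-3/2}\|f\|_{L^4}^4$ via the bilinear inequality; and a narrow case, in which $|T_\lambda f(x,y)|\lesssim|T_\lambda f_\tau(x,y)|$ for a single interval $\tau$ of length $K^{-1/2}$. For the narrow case, the parabolic rescaling $t=t_0+K^{-1/2}s$ together with the spatial change of variables $X^2=K^{1/2}(x^2+2t_0y)$, $Y=y$ converts $T_\lambda f_\tau$ into a copy of $T_\mu$ at the smaller parameter $\mu=\lambda/K$; setting $Q(\mu):=\mu^{3/8}\|T_\mu\|_{L^4\to L^4}$, careful bookkeeping of the Jacobian $X/(K^{1/2}|x|)$ and the $L^4$-scaling of $\|f_\tau\|_{L^4}$ yields the narrow bound $CK^{-1/4}\lambda^{-3/2}Q(\lambda/K)^4\|f\|_{L^4}^4$. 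Thus
\[
Q(\lambda)^4 \leq CK^{4} + CK^{-1/4}Q(\lambda/K)^4,
\]
and a bootstrap with $K$ fixed large enough that $CK^{-1/4}<1$ gives $Q(\lambda)\leq C'K$ uniformly in $\lambda$, which is the claim.

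The main obstacle is the parabolic rescaling: the change of variables $X^2=K^{1/2}(x^2+2t_0 y)$ is only defined on $\{x^2+2t_0y>0\}$ and its Jacobian $X/(K^{1/2}|x|)$ blows up as $x\to 0$. I expect to resolve this by an additional spatial splitting of the output: on the regular region $\{x^2\gtrsim|y|\}$ one has $X\sim K^{1/4}|x|$, the Jacobian is uniformly $\sim K^{-1/4}$, and the rescaling runs cleanly; on the singular region $\{x^2\lesssim|y|\}$ the $x^2t$ term in the phase is subordinate to the $yt^2$ term, and a direct Hausdorff--Young estimate in $y$ combined with the trivial bound in $x$ already delivers the required $\lambda^{-3/8}$ decay. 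Making these spatial regions compatible with the broad-narrow dichotomy, and ensuring that the bilinear estimate continues to hold uniformly near the degenerate locus $\{x=0\}$, is the technical heart of the argument.
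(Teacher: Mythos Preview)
Your overall strategy---broad--narrow decomposition, a bilinear $L^2$ estimate via the change of variables $(t_1,t_2)\mapsto(t_1+t_2,\,t_1^2+t_2^2)$, a rescaling for the narrow part, and an induction on $\lambda$---is exactly the paper's. Your bilinear argument is a legitimate variant: where the paper factors the phase $\lambda(x^2u+yv)$ and quotes the $(1+1)$-dimensional Phong--Stein bound for the degenerate piece $e^{i\lambda x^2u}$, you work directly with Plancherel in $y$, the substitution $\xi=\lambda x^2$, and Hardy--Littlewood--Sobolev to absorb the $|\xi|^{-1/2}$ weight. Both routes give the same $\lambda^{-3/4}$ bilinear bound.

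The gap is in the narrow rescaling. Your spatial change $X^2=K^{1/2}(x^2+2t_0y)$ creates a genuine obstruction, and the fallback you propose on $\{x^2\lesssim|y|\}$ does not close: a direct Hausdorff--Young estimate in $y$ gives only $\|T_\lambda f_\tau\|_{L^4}\lesssim\lambda^{-1/4}\|f_\tau\|_{L^{4/3}}$, so after summing over the narrow caps this region contributes a term of order $\lambda^{1/2}K^{-1}$ to the recursion for $Q(\lambda)^4$, which grows in $\lambda$ and destroys the bootstrap. The paper sidesteps this entirely by \emph{not} changing the $x$-variable. Working with $K^{-1}$-intervals and $t=(s+j)/K$, the phase becomes, up to a constant,
\[
\frac{\lambda}{K}\Bigl(x^2s+\frac{y}{K}s^2\Bigr)+\frac{2\lambda jys}{K^2},
\]
and one only rescales the output via $Y=y/K$; the cross term $e^{i\,2(\lambda/K)jYs}$ is then carried as part of the amplitude (equivalently, as a perturbation of the phase that is linear in both $s$ and $Y$). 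No Jacobian singularity appears, and the bilinear step is unaffected because in the $(u,v)$ coordinates the extra linear term is removed by the shear $v\mapsto v+2ju$ with unit Jacobian. Replacing your spatial substitution by this $y$-only rescaling repairs the argument; the preliminary dyadic reduction to $t\sim1$ is then also unnecessary, since the cutoff already localises $t$ to $[-1,1]$.
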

In fact, by simple linear transfomation, we can generalize this theorem to more general cases.
\begin{corollary}
	Consider the following operators
	\begin{equation*}
		\tilde{T}_\lambda^\psi f(x,y)=\int_{\R}e^{i\lambda[(ax+by)t^2+(cx+dy)^2t]}\psi(x,,y,t)f(t)dt,
	\end{equation*}
if the matrix 
$\left(
\begin{matrix}
	a & b\\
	c & d
\end{matrix}\right)$ is nonsigular, we also have the sharp $L^4$ decay estimate
	\begin{equation*}
	\|\tilde{T}_\lambda^\psi f\|_{L^4(\R^2)}\lesssim C_{\psi, a,b,c,d}\lambda^{-\frac{3}{8}} \|f\|_{L^4(\R)}.
\end{equation*}
\end{corollary}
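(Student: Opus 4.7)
The plan is to reduce the corollary to Theorem~\ref{Main-Thm} by a single linear change of variables in the target plane. First I would introduce the new coordinates
\[
    \tilde{x} := cx+dy, \qquad \tilde{y} := ax+by,
\]
which are designed so that
\[
    (ax+by)\,t^{2} + (cx+dy)^{2}\,t \;=\; \tilde{x}^{\,2}\,t + \tilde{y}\,t^{2},
\]
exactly matching the model phase $x^{2}t + yt^{2}$ appearing in Theorem~\ref{Main-Thm}. The hypothesis that $\bigl(\begin{smallmatrix}a&b\\c&d\end{smallmatrix}\bigr)$ is nonsingular is precisely the statement that the map $M:(x,y)\mapsto(\tilde x,\tilde y)$ is a linear isomorphism of $\R^{2}$, with Jacobian determinant $ad-bc\neq 0$.

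Next I would transport the amplitude by setting $\tilde\psi(\tilde x,\tilde y,t):=\psi\bigl(M^{-1}(\tilde x,\tilde y),t\bigr)$; this is again a smooth compactly supported cutoff in the three variables (the preimage of a compact set under an invertible linear map is compact), so it is an admissible amplitude for Theorem~\ref{Main-Thm}. With this choice one obtains the pointwise identity $\tilde T_\lambda^\psi f(x,y) = T_\lambda^{\tilde\psi}f(\tilde x,\tilde y)$, and a direct change of variable in the $L^{4}$ integral yields
\[
    \|\tilde T_\lambda^\psi f\|_{L^{4}(\R^{2})} = |ad-bc|^{-1/4}\,\|T_\lambda^{\tilde\psi}f\|_{L^{4}(\R^{2})}.
\]

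Applying Theorem~\ref{Main-Thm} to the right-hand side gives $\|T_\lambda^{\tilde\psi}f\|_{L^{4}}\lesssim C_{\tilde\psi}\,\lambda^{-3/8}\|f\|_{L^{4}}$, and combining with the previous display produces the claimed bound with explicit constant $C_{\psi,a,b,c,d} := |ad-bc|^{-1/4}\,C_{\tilde\psi}$. I would not expect any genuine obstacle in this argument: its entire content is the observation that the two-parameter family of cubic phases appearing in the corollary is affinely equivalent (on the $(x,y)$-variables) to the model phase of Theorem~\ref{Main-Thm}, and the substitution above records this equivalence. The only point one has to monitor is that the implicit constant depends on $(a,b,c,d)$ both through the Jacobian factor and through the possibly enlarged support of $\tilde\psi$, which is why the constant in the corollary is labelled $C_{\psi,a,b,c,d}$ rather than just $C_\psi$.
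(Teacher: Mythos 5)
Your proof is correct and implements precisely the ``simple linear transformation'' that the paper invokes but does not write out: the substitution $\tilde x = cx+dy$, $\tilde y = ax+by$ converts the phase to the model $\tilde x^2 t + \tilde y t^2$, and the Jacobian factor $|ad-bc|^{-1/4}$ together with the transported cutoff $\tilde\psi$ gives the stated constant. Your remark that the dependence on $(a,b,c,d)$ enters both through the Jacobian and through the support of $\tilde\psi$ is exactly the right point to flag, since Theorem~\ref{Red-Thm} is stated for amplitudes in the normalized class $\mathcal U$ supported in $[-1,1]^3$.
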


{\bf Notation:} In this paper, $A\lesssim B$ means that there exists an absolute constant $C$ such that $A\leq CB$. 

 \section{Outline of the proof}
 The strategy of proving Theorem \ref{Main-Thm} is similar to what have been done to Fourier restriction estimates. We use broad-narrow method to divide $T_\lambda^\psi f(x,y)$ into broad part and narrow part. Then we establish a rescaling lemma to deal with the narrow part. By employing the classical result of Phong-Stein \cite{phong1994models} in $(1+1)-$dimensional case, we establish a bilinear estimate for the broad part. Last, we combine the argument above and derive an induction relation which leads to the final result.
 
Before the formal argument, we need some uniform constraints on the support function. Denote the class of functions $\mathcal{U}$ by
\begin{equation*}
\mathcal{U}:=\{\psi(x,y,t):\supp{\psi}\subset[-1,1]^3, \abs{\partial_t^j\psi}\leq 1, \quad j=1,2,3\}.
\end{equation*}
Based on this, we further write
\begin{equation*}
	Q_p(\lambda):=\sup_{\psi\in\mathcal{U}}\sup_{\|f\|_{L^p(\R)}\leq 1}{\|T_\lambda^\psi f\|_{L^p(\R^2)}}.
\end{equation*}
To prove Theorem \ref{Main-Thm}, it suffices to show the following result.
\begin{theorem}\label{Red-Thm}
	\begin{equation*}
		Q_4(\lambda)\lesssim \lambda^{-\frac{3}{8}}.
	\end{equation*}
\end{theorem}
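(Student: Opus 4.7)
The plan is to follow the broad-narrow scheme outlined in Section 2: for a large parameter $K$ (to be chosen as a small power of $\lambda$), partition the $t$-integration interval $[-1,1]$ into $\sim K$ subintervals $\{I_\tau\}$ of length $1/K$, write $f=\sum_\tau f_\tau$ with $f_\tau=f\chi_{I_\tau}$, and at each point $(x,y)$ split the contributions into a \emph{broad} case (two non-adjacent caps contribute comparably) and a \emph{narrow} case (one cap dominates the others up to a loss of $K$). This gives the pointwise inequality
\[
|T_\lambda^\psi f(x,y)|\lesssim \operatorname{Br}_K T_\lambda^\psi f(x,y)+K\max_{\tau}|T_\lambda^\psi f_\tau(x,y)|.
\]

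For the narrow part I would establish a rescaling lemma. Setting $t=t_\tau+s/K$ in the integral for $T_\lambda^\psi f_\tau$, the phase splits as $\lambda(x^2 t_\tau+yt_\tau^2)+\lambda(x^2+2yt_\tau)s/K+\lambda y s^2/K^2$; the first summand is a pure modulation in $(x,y)$ that drops from absolute values. The natural parabolic rescaling of the model phase, namely $x\mapsto K^{1/2}\tilde x$, $y\mapsto K^2 \tilde y$, restores the original form of the phase with a new effective oscillation parameter $\lambda/K^c$ for an explicit $c$, at the cost of tracking the Jacobian in $dx\,dy$ and a cutoff supported on a small cube. Applying the definition of $Q_4$ to each rescaled piece, summing over $\tau$ via the disjointness of the $f_\tau$, and absorbing the factor $K$ from the narrow inequality yields a bound of the form
\[
\|\mathrm{narrow}\|_{L^4(\R^2)}\lesssim K^{\alpha}\,Q_4(\lambda/K^c)\,\|f\|_{L^4(\R)}.
\]

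For the broad part I would prove, for two well-separated caps $\tau, \tau'$, the bilinear $L^2$ estimate
\[
\|T_\lambda^\psi f_\tau \cdot T_\lambda^\psi f_{\tau'}\|_{L^2(\R^2)}\lesssim \lambda^{-3/4}\|f_\tau\|_{L^2}\|f_{\tau'}\|_{L^2}.
\]
The main ingredient is the Phong--Stein theorem \cite{phong1994models} applied slicewise: fixing $x$, the restricted operator in $(y,t)$ has phase $yt^2+x^2 t$, whose mixed Hessian $\partial_y\partial_t=2t$ vanishes only at $t=0$ and only to first order, so Phong--Stein produces a sharp $L^2$ slice bound. Integrating over $x$ by Minkowski and combining with the standard bilinear-to-linear Cauchy--Schwarz trick in the variables $t_1-t_2$, $t_1+t_2$ (whose Jacobian is $\gtrsim 1$ by the separation of $\tau, \tau'$) produces the desired $\lambda^{-3/4}$. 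Summing the bilinear inequality over cap pairs and invoking $\|f\|_{L^2}\leq\|f\|_{L^4}$ on the unit interval converts this into
\[
\|\operatorname{Br}_K T_\lambda^\psi f\|_{L^4(\R^2)}\lesssim K^{O(1)}\,\lambda^{-3/8}\,\|f\|_{L^4(\R)}.
\]

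Combining the two estimates yields the recursive inequality $Q_4(\lambda)\leq C K^{O(1)}\lambda^{-3/8}+C K^{\alpha}Q_4(\lambda/K^c)$. Choosing $K$ a small power of $\lambda$ and using the trivial bound $Q_4(\lambda)\lesssim 1$ at bounded $\lambda$ to terminate the induction closes the estimate and yields $Q_4(\lambda)\lesssim \lambda^{-3/8}$. The main obstacle is the rescaling step: because the phase depends on $x$ through $x^2$ rather than $x$, the change of variables $u=x^2$ introduces a singular weight $|x|^{-1}$ in $L^4$ that must be tracked, and the anisotropy between the $x^2 t$ and $yt^2$ terms makes the parabolic rescaling valid only on the subregion $x^2+2yt_\tau\geq 0$ (the complementary region must be handled by a symmetric argument or absorbed into a different phase model). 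Ensuring that the exponents $\alpha$ and $c$ produced by the rescaling interact with the $\lambda^{-3/8}$ from the broad side so as to exactly close the induction without a $\lambda^{\epsilon}$ loss is the delicate bookkeeping at the heart of the proof.
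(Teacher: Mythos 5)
Your high-level architecture matches the paper's exactly: broad-narrow decomposition with $f=\sum_j f_j$ supported on $1/K$-intervals, a rescaling lemma for the narrow part, a bilinear estimate via Phong--Stein for the broad part, and a recursive inequality for $Q_4$. But both of the two key technical steps contain genuine gaps, and in both places the paper does something simpler than what you propose.

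\textbf{The bilinear step.} You change variables to $(t_1+t_2,\,t_1-t_2)$, claiming its Jacobian is $\gtrsim 1$ by the separation of the caps. That Jacobian is identically $2$, so separation plays no role there; more importantly, after this change the product phase becomes
\[
x^2(t_1+t_2)+y(t_1^2+t_2^2)=x^2 u+\tfrac{y}{2}\bigl(u^2+w^2\bigr),\qquad u=t_1+t_2,\ w=t_1-t_2,
\]
and the term $\tfrac{y}{2}u^2$ couples $y$ with $u$, so the phase does \emph{not} tensorize and neither slicing in $x$ nor a tensor-product application of Phong--Stein applies. The paper instead uses $(u,v)=(t+s,\,t^2+s^2)$, with Jacobian $2|t-s|\gtrsim |j-k|/K$ (this is where cap separation is actually used), after which the phase is exactly $x^2u+yv$ and the operator factors as a tensor of a Phong--Stein operator in $(x,u)$ with phase $x^2u$ (giving $\lambda^{-1/4}$) and a Fourier-type operator in $(y,v)$ (giving $\lambda^{-1/2}$), yielding the $\lambda^{-3/4}$ bilinear bound. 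Your slicewise variant (fix $x$, Phong--Stein in $(y,t)$, integrate in $x$) would only produce $\lambda^{-1/4}$, not $\lambda^{-3/4}$, because it gains nothing from the $x$-integration; the tensorization is essential.

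\textbf{The rescaling step.} You propose $x\mapsto K^{1/2}\tilde x$, $y\mapsto K^2\tilde y$ together with completing a square in $x$ (effectively $u=x^2$), and you correctly flag the resulting singular $|x|^{-1}$ Jacobian and the sign constraint $x^2+2yt_\tau\geq 0$ as the central obstacle — but you leave them unresolved, and resolving them is not a bookkeeping issue. The paper avoids the whole problem: after $t=(s+j)/K$, it rescales only $y\mapsto y/K$, keeps $x$ untouched (the $x^2t$ term is linear in $t$, so $t\mapsto t/K$ already reduces its oscillation parameter to $\lambda/K$ with no $x$-rescaling), and places the residual cross-term $e^{i\,2\lambda jsy/K^2}$ into the amplitude. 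This yields $\|T_\lambda^\psi f_j\|_{L^4}\lesssim K^{-1/2}Q_4(\lambda/K)\|f_j\|_{L^4}$ with no nonlinear change of variables. Relatedly, the paper keeps $K$ a large fixed constant and iterates the resulting recursion directly (the geometric factor $K^{-1/8}$ makes the series converge), whereas you let $K$ be a small power of $\lambda$; that route can also close, but it then demands that all the $K^{O(1)}$ losses on the broad side be tracked precisely, which your sketch does not do.

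In short: same scheme, but your bilinear change of variables does not separate the phase and your rescaling plan has a genuine, acknowledged but unaddressed singularity; the paper's choices for both steps are different precisely so as to avoid these two obstacles.
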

This can be derived by the following iteration relation.
\begin{lemma}\label{Main-Lemm} For any large number $K(K>10^4)$, we have
	\begin{equation*}
		Q_4(\lambda)\lesssim K\lambda^{-\frac{3}{8}}+K^{-\frac{1}{2}}Q_4(\lambda/K).
	\end{equation*}
\end{lemma}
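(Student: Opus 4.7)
The plan is to execute the broad--narrow scheme sketched in Section~2. Partition $[-1,1]$ into $\sim K$ disjoint intervals $\tau_k$ of length $1/K$ centered at $t_k$, write $f=\sum_k f_k$ with $f_k=f\mathbf{1}_{\tau_k}$, and set $F_k:=T_\lambda^\psi f_k$. Sorting the $|F_k(x,y)|$ at each point with transversality threshold $\alpha\sim 1/K$ yields the Bourgain--Guth pointwise decomposition
\[
	|T_\lambda^\psi f(x,y)|^4\;\lesssim\;K^4\sum_{|k_1-k_2|\ge C_0}|F_{k_1}(x,y)|^2|F_{k_2}(x,y)|^2+\sum_k|F_k(x,y)|^4,
\]
for an absolute constant $C_0$. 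The first (broad) term is handled by a bilinear $L^2$ estimate; the second (narrow) by a rescaling lemma.

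For the narrow term, substituting $t=t_k+s/K$ turns the phase into
\[
	\lambda(x^2 t_k+yt_k^2)+\frac{\lambda}{K}\Bigl[(x^2+2yt_k)s+\frac{y}{K}s^2\Bigr],
\]
so that on the portion of the support where $|x|$ is bounded away from $0$, the change of variables $(x,y)\mapsto(X,Y):=(\sqrt{x^2+2yt_k},\,y/K)$ is a smooth diffeomorphism with Jacobian of order $K$. After absorbing the unimodular prefactor, this identifies $T_\lambda^\psi f_k(x,y)$ with a scalar multiple of $K^{-1}T_{\lambda/K}^{\tilde\psi}\tilde f_k(X,Y)$, where $\tilde f_k(s):=f(t_k+s/K)$ satisfies $\|\tilde f_k\|_{L^4}^4=K\|f_k\|_{L^4}^4$. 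Treating the small-$|x|$ exceptional region by a direct sublevel-set estimate then gives
\[
	\|F_k\|_{L^4(\R^2)}\lesssim K^{-1/2}Q_4(\lambda/K)\|f_k\|_{L^4(\R)},
\]
and summing over $k$ yields $\sum_k\|F_k\|_{L^4}^4\lesssim K^{-2}Q_4(\lambda/K)^4\|f\|_{L^4}^4$ by disjointness of the $\tau_k$.

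For the broad term I expand $F_{k_1}F_{k_2}$ as a double integral over $\tau_{k_1}\times\tau_{k_2}$ and apply the symmetric-function substitution $(t,s)\mapsto(u,w):=(t+s,\,t^2+s^2)$, whose Jacobian $2|t-s|$ is of order $|k_1-k_2|/K$ by transversality. This writes $F_{k_1}F_{k_2}(x,y)=\iint e^{i\lambda(x^2u+yw)}H(u,w)\,du\,dw$ with $\|H\|_{L^2}^2\lesssim(K/|k_1-k_2|)\|f_{k_1}\|_{L^2}^2\|f_{k_2}\|_{L^2}^2$. Plancherel in $y$ then contributes $\lambda^{-1}$ in squared norm, and for each fixed $u$ the remaining oscillatory integral in $x$ has phase $x^2 u$: the Phong--Stein Newton-polygon estimate \cite{phong1994models} bounds its squared $L^2\to L^2$ norm by $\lambda^{-1/2}$. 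Combining with H\"older ($\|f_{k_i}\|_{L^2}^2\le K^{-1/2}\|f_{k_i}\|_{L^4}^2$) produces
\[
	\|F_{k_1}F_{k_2}\|_{L^2(\R^2)}^2\lesssim|k_1-k_2|^{-1}\lambda^{-3/2}\|f_{k_1}\|_{L^4}^2\|f_{k_2}\|_{L^4}^2,
\]
and summing in $(k_1,k_2)$ (Cauchy--Schwarz at fixed gap plus the harmonic sum) gives $\sum_{|k_1-k_2|\ge C_0}\|F_{k_1}F_{k_2}\|_{L^2}^2\lesssim\lambda^{-3/2}\|f\|_{L^4}^4$, up to a $\log K$ factor that is harmless for $K>10^4$.

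Putting everything together, $\|T_\lambda^\psi f\|_{L^4}^4\lesssim K^4\lambda^{-3/2}\|f\|_{L^4}^4+K^{-2}Q_4(\lambda/K)^4\|f\|_{L^4}^4$, and taking fourth roots yields the claimed iteration. The main obstacle is the rescaling step: the map $(x,y)\mapsto(X,Y)$ is only a partial diffeomorphism (degenerating on $\{x^2+2yt_k=0\}$, with Jacobian singular as $|x|\to 0$), so the $(x,y)$-support must be split into a generic piece on which the rescaling is clean and a small-measure exceptional piece handled by an elementary bound. The bilinear step is comparatively routine once one passes to the $(u,w)$ coordinates and invokes the Phong--Stein $(1+1)$-dimensional estimate.
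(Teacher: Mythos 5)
Your broad/narrow split, the change of variables $(u,v)=(t+s,t^2+s^2)$ in the bilinear part, and the appeal to the Phong--Stein $(1+1)$-dimensional estimate on the separated phase $x^2u+yv$ all track the paper, and the bilinear arithmetic ($|t-s|\gtrsim|k_1-k_2|/K$, $\lambda^{-3/2}$ for the squared norm, H\"older and the harmonic sum) is essentially the paper's argument. However, the rescaling step --- which you yourself flag as ``the main obstacle'' --- contains a genuine gap that is not closed, and your proposed route is not the one the paper takes. After translating $t=t_k+s/K$ you try to absorb the cross term $(2yt_k)s/K$ via the substitution $X=\sqrt{x^2+2yt_k}$, $Y=y/K$. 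The radicand is negative on the open region $\{y<-x^2/(2t_k)\}$, which for $t_k$ bounded below is a fixed fraction of the $(x,y)$-support, so the map is not even real-valued there, let alone a diffeomorphism; in addition the Jacobian $|x|/(K\sqrt{x^2+2yt_k})$ degenerates as $x\to0$. Neither bad set has small measure, and a neighborhood of $\{x=0\}$ is exactly where $x^2t+yt^2$ is most degenerate and where the exponent $\lambda^{-3/8}$ is saturated, so an ``elementary sublevel-set bound'' cannot recover the inductive estimate on the exceptional piece. As written, the narrow estimate does not reproduce an operator in the class defining $Q_4$.

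The paper avoids this entirely by never touching the $x$-variable: it rescales only $y\mapsto y/K$, writes the unimodular cross-term factor $e^{i2\lambda jsy/K^2}$ into the amplitude, sets $\psi_j(x,y,s)=e^{i2\lambda jsy/K^2}\psi(x,y,(s+j)/K)$ and asserts $\psi_j\in\mathcal U$, giving $|T_\lambda^\psi f_j(x,y)|=K^{-1}|T_{\lambda/K}^{\psi_j}f_{j,K}(x,y/K)|$ directly --- no square root, no Jacobian degeneracy, no exceptional region. A smaller but real slip: the transversality threshold must be an absolute constant (the paper takes $\alpha=10^{-4}$ and requires $K>10^4$), not $\alpha\sim 1/K$; with $\alpha\sim 1/K$ the narrow pointwise bound carries $\alpha^{-4}\sim K^4$, which would turn the required $K^{-2}$ gain in the narrow term into a $K^{2}$ loss. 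The pointwise decomposition you wrote down is the correct one, but it is produced by a fixed $\alpha$, not by $\alpha\sim 1/K$.
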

It should be pointed out, using induction to prove oscillatory estimates had appeared previously in \cite{bak2009restriction} or even earlier in \cite{nagel1978differentiation}.

\section{Optimality}
Now we give an example to show that the decay estimate in Theorem \ref{Main-Thm} is sharp.

Suppose $\psi$ is a nonnegative cut-off function and satisfies
\begin{equation}\label{Supp-Func}
	\psi(x,y,t)=\begin{cases}
		0, &\quad \abs{(x,y,t)}\geq 1,\\
		1, &\quad \abs{(x,y,t)}\leq \frac{1}{2}.
	\end{cases}
\end{equation}
We choose the test function as
\begin{equation}\label{Tes-Func}
	f(t)=\chi_{[0,1]}(t).
\end{equation}
Assume the priori estimate 
\begin{equation*}
	\|T_\lambda^\psi f\|_{L^4(\R^2)}\lesssim C_\psi\lambda^{-\delta} \|f\|_{L^4(\R)}.
\end{equation*}
Then for the specific function \eqref{Tes-Func}, we see that
\begin{equation*}
	\left[\iint\abs{\int_0^1e^{i\lambda(xt^2+y^2t)}\psi(x,y,t)dt}dxdy\right]^{\frac{1}{4}}\lesssim C_\psi\lambda^{-\delta}.
\end{equation*}
With the support function \eqref{Supp-Func}, we have
\begin{equation*}
	\lambda^{-\frac{3}{8}}\lesssim \left[\int_{\abs{y}\lesssim \lambda^{-1}}\int_{\abs{x}\lesssim \lambda^{-\frac{1}{2}}}\abs{\int_0^1e^{i\lambda(x^2t+yt^2)}\psi(x,y,t)dt}dxdy\right]^{\frac{1}{4}}\lesssim C_\psi\lambda^{-\delta}.
\end{equation*}
Since the inequality holds for arbitrarily large $\lambda$, then it requires
\begin{equation*}
	\delta\leq \frac{3}{8}.
\end{equation*}
This ultimately shows the optimality.
\section{Proof of Theorem \ref{Red-Thm}}
Broad-narrow method was introduced by Bourgain-Guth in \cite{bourgain2011bounds} which efficiently reduce the linear restriction estimates to multilinear estimates which are understood well. As usual, we decompose the test function into $K$ parts
\begin{equation*}
	f(t)=\sum_{j=0}^{K-1} f_j(t),
\end{equation*}
where $\supp f_j\subset [\frac{j}{K}, \frac{j+1}{K})\bigcup (-\frac{j+1}{K},-\frac{j}{K})]$. For a positive number $\alpha\in (0,1)$(in application, we choose $\alpha=10^{-4}$), we say a point $(x,y)\in [-1, 1]^2$ is $\alpha-$\emph{broad} if 
\begin{equation*}
	\max_j{\abs{T_\lambda^\psi f_j(x,y)}}\leq \alpha \abs{T_\lambda^\psi f(x,y)},
\end{equation*}
otherwise, the point $(x,y)$ is called \emph{narrow}. We write
\begin{equation*}
	Br_\alpha (T_\lambda^\psi f)(x,y)=\begin{cases}
		\abs{T_\lambda^\psi f(x,y)} \quad \text{if }(x,y) \text{ is $\alpha-$broad};\\
		0 \quad\quad\quad\quad\quad\  \text{if }(x,y) \text{ is narrow}.
	\end{cases}
\end{equation*}
Therefore, pointwisely we have
\begin{equation*}
	\abs{T_\lambda^\psi f(x,y)}\leq Br_\alpha (T_\lambda^\psi f)(x,y)+\alpha^{-1}\max_j\abs{T_\lambda^\psi f_j(x,y)}.
\end{equation*}
This implies the following inequality
\begin{equation}\label{Bro-Nar}
\iint \abs{T_\lambda^\psi f(x,y)}^4dxdy\leq \iint \abs{Br_\alpha (T_\lambda^\psi f)(x,y)}^4dxdy+\alpha^{-4}\sum_{j=1}^{k-1}\iint \abs{T_\lambda^\psi f_j(x,y)}^4dxdy.
\end{equation}
We deal with the first term using bilinear estimates and the latter one using a rescaling estimate we now turn to.
\subsection{Rescaling argument}
This part is devoted to proving a degenerate rescaling estimate which is basically same with the parabolic rescaling estimate. Now we state the main result.
\begin{lemma}
	\begin{equation}\label{Res-Est}
		\sup_{\psi\in\mathcal{U}}\norm{T_\lambda^\psi f_j}_{L^4(\R^2)}\lesssim K^{-\frac{1}{2}}Q_4(\lambda/K)\norm{f_j}_{L^4(\R)}.
	\end{equation}
\end{lemma}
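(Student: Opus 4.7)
The plan is to implement parabolic-style Galilean rescaling adapted to the cubic phase $x^2 t + y t^2$. By symmetry we may focus on $f_j$ supported on $[j/K,(j+1)/K)$ with $|j|\le K-1$. First substitute $t=(j+s)/K$ with $s\in[0,1)$, which produces a factor $1/K$ outside the integral and replaces $f_j$ by $g(s):=f_j((j+s)/K)$, for which $\|g\|_{L^4(\R)}^4=K\|f_j\|_{L^4(\R)}^4$. Expanding $\lambda(x^2 t+yt^2)$ around $t_0=j/K$ and pulling the $(x,y)$-only contribution out as a modulus-one prefactor leaves the oscillatory factor $\exp\bigl(i(\lambda/K)[(x^2+2jy/K)s+(y/K)s^2]\bigr)$ in the integrand.

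Next I change spatial coordinates to $X=x$, $Y=y/K$, which contributes the Jacobian $dxdy=K\,dXdY$ in the $L^4$ computation. The new amplitude $\tilde\psi(X,Y,s):=\psi(X,KY,(j+s)/K)$ lies in $\mathcal{U}$: its $Y$-support shrinks to $[-1/K,1/K]\subset[-1,1]$ and $|\partial_s^k\tilde\psi|=K^{-k}|\partial_t^k\psi|\le 1$. The phase becomes $(\lambda/K)[(X^2+2jY)s+Ys^2]$, which differs from the model $X^2 s+Ys^2$ only by the Galilean cross term $2jYs$. Applying the shift $\sigma=s+j$ rewrites the exponent as $(\lambda/K)(X^2\sigma+Y\sigma^2)-(\lambda/K)(jX^2+j^2Y)$; the $(X,Y)$-only correction $(\lambda/K)(jX^2+j^2Y)$ cancels exactly against the prefactor produced at the beginning (a direct check using $X=x$ and $Y=y/K$), so
\[
|T_\lambda^\psi f_j(x,y)|=\tfrac{1}{K}\Bigl|\int e^{i(\lambda/K)(X^2\sigma+Y\sigma^2)}\Psi(X,Y,\sigma)\,G(\sigma)\,d\sigma\Bigr|,
\]
where $G(\sigma):=g(\sigma-j)$ has $\|G\|_{L^4}=\|g\|_{L^4}$ and $\Psi(X,Y,\sigma):=\tilde\psi(X,Y,\sigma-j)$ is supported on $[-1,1]^2\times[j,j+1]$.

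The main obstacle is that $\Psi$ is supported in $\sigma\in[j,j+1]$ rather than $[-1,1]$, so strictly speaking $\Psi\notin\mathcal{U}$ and one cannot invoke $Q_4(\lambda/K)$ verbatim. The claim to establish is that the $L^4\to L^4$ norm of the model-phase operator is comparable to $Q_4(\lambda/K)$ regardless of which unit $\sigma$-interval carries the amplitude, or equivalently that the ``sheared'' operator $S_\mu^{\tilde\psi,j}g(X,Y)=\int e^{i\mu[(X^2+2jY)s+Ys^2]}\tilde\psi(X,Y,s)g(s)\,ds$ with $\tilde\psi\in\mathcal{U}$ satisfies $\|S_\mu^{\tilde\psi,j}g\|_{L^4}\le Q_4(\mu)\|g\|_{L^4}$. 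My proposed route is a nonlinear spatial substitution $(X,Y)\mapsto(X',Y')$ with $X'^2=X^2+2jY$ and $Y'=Y$, valid on $\{X^2+2jY\ge 0\}$, which turns the sheared phase into the standard form $X'^2 s+Y' s^2$; the resulting Jacobian weight $X'/|X|$ is managed by a dyadic decomposition in $|X|$, using that the $Y$-support has measure $O(1/K)$ to confine the singular contribution at $X=0$, while the complementary region $\{X^2+2jY<0\}$ is handled by a direct size estimate.

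With that reduction in hand, combining the Jacobian factors gives
\[
\|T_\lambda^\psi f_j\|_{L^4(\R^2)}^4=K^{-3}\|T_{\lambda/K}^{\Psi}G\|_{L^4}^4\lesssim K^{-3}Q_4(\lambda/K)^4\|G\|_{L^4}^4=K^{-2}Q_4(\lambda/K)^4\|f_j\|_{L^4(\R)}^4,
\]
and taking fourth roots yields the claimed inequality.
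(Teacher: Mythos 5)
Your rescaling computation agrees with the paper's up to the point where the cross term appears: after substituting $t=(j+s)/K$, pulling out the $(x,y)$-only factor, and rescaling $Y=y/K$, the phase is $\frac{\lambda}{K}\bigl[(X^2+2jY)s+Ys^2\bigr]$ and the Jacobian/$L^4$ bookkeeping ($\|T_\lambda^\psi f_j\|_{L^4}^4=K^{-3}\|\cdot\|_{L^4}^4$, $\|g\|_{L^4}^4=K\|f_j\|_{L^4}^4$) is exactly right. Where you diverge is in the treatment of the cross term $2jYs$. The paper's route is to leave $s$ in $[0,1]$ and absorb the cross term into the amplitude, setting $\psi_j(x,y,s)=e^{i2\lambda jsy/K^2}\psi(x,y,(s+j)/K)$ and asserting ``it is easy to verify that $\psi_j\in\mathcal{U}$.'' You chose instead to shift $\sigma=s+j$, which produces the clean model phase $X^2\sigma+Y\sigma^2$ but pushes the amplitude support to $\sigma\in[j,j+1]$; you then correctly flag that $\Psi\notin\mathcal{U}$ and that one cannot invoke $Q_4(\lambda/K)$ directly. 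In fact the very same obstruction is present in the paper's version (and is not addressed there): $|\partial_s\psi_j|$ contains the term $\frac{2\lambda j|y|}{K^2}\sim\frac{\lambda}{K}$ for $j\sim K$, so $\psi_j$ fails the $\mathcal{U}$-bounds $|\partial_t^k\psi|\le 1$ once $\lambda\gg K$. Your diagnosis of the obstacle is therefore sharper than the paper's.

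However, your proof is not complete: the nonlinear substitution $X'^2=X^2+2jY$ is offered as a route but is not carried through, and it is not merely a routine step. The Jacobian is $dX=\frac{X'}{X}\,dX'$, and near the critical locus $X=0$ (equivalently $X'^2=2jY'$) the weight behaves like $(X'-\sqrt{2jY'})^{-1/2}$, which is locally integrable but unbounded; a change of variables that introduces an unbounded weight inside an $L^4$ norm is not for free, and ``dyadic decomposition in $|X|$'' plus ``$Y$-support of measure $O(1/K)$'' does not by itself close this. In addition, on the complementary region $\{X^2+2jY<0\}$ the substitution is undefined and you only gesture at a ``direct size estimate.'' Finally, even if the substitution were executed, the map $(X,Y)\mapsto(X',Y')$ is not a bijection onto $[-1,1]^2$, so the resulting amplitude would not obviously lie in $\mathcal{U}$. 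In short, you have correctly identified the genuine difficulty in this lemma --- the Galilean cross term is not affine in $(x,y)$ because the phase is $x^2t$ rather than $xt$ --- but the reduction from the sheared operator to $Q_4(\lambda/K)$, which is the actual content of the lemma, is left as an unproved claim, so the argument has a gap at the same conceptual point where the paper's argument is also unconvincing.
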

\begin{proof}
For a cut-off function $\phi$, we may assume $f_j(t)=f(t)\phi_j(t)$ where $\phi_j(t)=\phi\left(K\left(t-\frac{j}{K}\right)\right)$, then
	\begin{align*}
		T_\lambda^\psi f_j(x,y)&=\int_{\R}e^{i\lambda(x^2t+yt^2)}\psi(x,y,t)\phi_j(t)f(t)dt,\\
		&=\int_{\R}e^{i\lambda(x^2t+yt^2)}\psi(x,y,t)\phi\left(K\left(t-\frac{j}{K}\right)\right)f(t)dt\\
		&=\int_{\R}e^{i\lambda\left[x^2\frac{s+j}{K}+y(\frac{s+j}{K})^2\right]}\psi\left(x,y,\frac{s+j}{K}\right)\phi(s)f\left(\frac{s+j}{K}\right)\frac{ds}{K},\\
		&=e^{i\frac{\lambda}{K}\left(x^2j+\frac{yj^2}{K}\right)}\int_{\R}e^{i\frac{\lambda}{K}\left(x^2s+\frac{ys^2}{K}\right)}e^{i\frac{2\lambda jsy}{K^2}}\psi\left(x,y,\frac{s+j}{K}\right)\phi(s)f\left(\frac{s+j}{K}\right)\frac{ds}{K}.
	\end{align*}
Set $\psi_j(x,y,s)=e^{i\frac{2\lambda jsy}{K^2}}\psi\left(x,y,\frac{s+j}{K}\right)$ and $f_{j,K}(s)=\phi(s)f\left(\frac{s+j}{K}\right)$ it is easy to verify that $\psi_j\in \mathcal{U}$. Therefore,
\begin{equation*}
	\abs{T_\lambda^\psi f_j(x,y)}=\frac{1}{K}\abs{T_{\lambda/K}^{\psi_j}f_{j,K}(x,y/K)}.
\end{equation*}
So we have
\begin{align*}
	\norm{T_\lambda^\psi f_j}_{L^4(\R^2)}^4&=\frac{1}{K^4}\cdot K\norm{T_{\lambda/K}^{\psi_j}f_{j,K}}_{L^4(\R^2)}^4\\
	&\leq \frac{1}{K^3}Q_4^4\left(\frac{\lambda}{K}\right)K\norm{f_{j,K}}_{L^4(\R^2)}^4\\
	&=\frac{1}{K^2}Q_4^4\left(\frac{\lambda}{K}\right)\norm{f_j}_{L^4(\R^2)}^4.
\end{align*}
This implies the inequality \eqref{Res-Est}.
\end{proof}
Consequently, the latter term in RHS of \eqref{Bro-Nar} is bounded from above by
\begin{equation}\label{Nar-Est}
	10^{4}K^{-2}Q_4^4\left(\lambda/K\right)\norm{f}_{L^4(\R)}^4.
\end{equation}
\subsection{Bilinear estimate}
We now deal with the first term in RHS of \eqref{Bro-Nar}. With the assumption $\alpha=10^{-4}$, we know that for each $\alpha-$broad point $(x,y)$ there exist $j,k$ with $|j-k|\geq 2$ such that 
\begin{equation*}
	\abs{T_\lambda^\psi f(x,y)}\leq K\abs{T_\lambda^\psi f_j(x,y)}^{\frac{1}{2}}\abs{T_\lambda^\psi f_k(x,y)}^{\frac{1}{2}}.
\end{equation*}
Notice that the indices $j,k$ depend on the point $(x,y)$,  we use summation over all indices to eliminate such dependence. For each $\alpha-$broad point $(x,y)$ we always have
\begin{equation*}
		\abs{T_\lambda^\psi f(x,y)}^4\leq K^4\sum_{\abs{j-k}\geq 2}\abs{T_\lambda^\psi f_j(x,y)}^{2}\abs{T_\lambda^\psi f_k(x,y)}^{2}.
\end{equation*} 
This implies
\begin{align*}
	&\iint \abs{Br_\alpha (T_\lambda^\psi f)(x,y)}^4dxdy\\
	&\leq K^4\sum_{\abs{j-k}\geq 2}\iint\abs{T_\lambda^\psi f_j(x,y)}^{2}\abs{T_\lambda^\psi f_k(x,y)}^{2}dxdy\\
	&=K^4\sum_{\abs{j-k}\geq 2}\iint\abs{T_\lambda^\psi f_j(x,y)T_\lambda^\psi f_k(x,y)}^{2}dxdy\\
	&=K^4\sum_{\abs{j-k}\geq 2}\iint\abs{\iint e^{i\lambda\left(x^2t+yt^2+x^2s+ys^2\right)}\psi(x,y,t)\psi(x,y,s)f_j(t)f_k(s)dtds}^2dxdy.
\end{align*}
By changing variables
\begin{equation*}
	u=t+s,\qquad v=t^2+s^2,
\end{equation*}
then
\begin{align*}
	&\iint e^{i\lambda\left(x^2t+yt^2+x^2s+ys^2\right)}\psi(x,y,t)\psi(x,y,s)f_j(t)f_k(s)dtds\\
	=&\iint e^{i\lambda\left(x^2u+yv\right)}\psi(x,y,t(u,v))\psi(x,y,s(u,v))f_j(t(u,v))f_k(s(u,v))\frac{dudv}{2\abs{t-s}}.
\end{align*}
Write 
\begin{equation*}
	F_{j,k}(u,v)=\frac{f_j(t(u,v))f_k(s(u,v))}{2\abs{t-s}},\quad \psi(x,y,u,v)=\psi(x,y,t(u,v))\psi(x,y,s(u,v)),
\end{equation*}
and transform the integral above to
\begin{equation*}
	\iint e^{i\lambda\left(x^2u+yv\right)}\psi(x,y,u,v)F_{j,k}(u,v)dudv.
\end{equation*}
Return to the broad part
\begin{align*}
	&\iint \abs{Br_\alpha (T_\lambda^\psi f)(x,y)}^4dxdy\\
	&\leq K^4\sum_{\abs{j-k}\geq 2}\iint\abs{\iint e^{i\lambda\left(x^2u+yv\right)}\psi(x,y,u,v)F_{j,k}(u,v)dudv}^2dxdy.
\end{align*}
Observe that the phase function in the integrand can be viewed as separation of variables. Iterating the $(1+1)-$dimensional result of Phong-Stein \cite{phong1994models}, we arrive at
\begin{equation}\label{Dec-Est}
	\iint \abs{Br_\alpha (T_\lambda^\psi f)(x,y)}^4dxdy\leq C\lambda^{-\frac{3}{2}}K^4\sum_{\abs{j-k}\geq 2}\norm{F_{j,k}(u,v)}_{L^2(dudv)}^2.
\end{equation}
It should be noted that the constant $C$ depends on the upper bounds of $\abs{\partial_u^j\psi(x,y,u,v)}$ and $\abs{\partial_v^j\psi(x,y,u,v)}$ for $j=1,2$, by omitting some cumbersome details we can verify that 
\begin{equation*}
	\abs{\partial_u^j\psi(x,y,u,v)}\lesssim 1, \quad \abs{\partial_v^j\psi(x,y,u,v)}\lesssim 1,
\end{equation*}
and this is why the class $\mathcal{U}$ needs third derivatives. Since the supports of $f_j$ and $f_k$ are essentially separated, then 
\begin{align*}
	\norm{F(u,v)}_{L^2(dudv)}^2&=\iint\abs{\frac{f_j(t(u,v))f_k(s(u,v))}{2(t(u,v)-s(u,v))}}^2dudv\\
	&=\iint\frac{\abs{f_j(t)f_k(s)}^2}{\abs{2(t-s)}}dtds\\
	&\lesssim \frac{K}{\abs{j-k}}\norm{f_j}_{L^2}^2\norm{f_k}_{L^2}^2.
\end{align*} 
So return to \eqref{Dec-Est}, we know
\begin{align}
		\notag\sup_{\psi\in\mathcal{U}}\iint \abs{Br_\alpha (T_\lambda^\psi f)(x,y)}^4dxdy&\lesssim \lambda^{-\frac{3}{2}}K^4\sum_{\abs{j-k}\geq 2}\frac{K}{\abs{j-k}}\norm{f_j}_{L^2}^2\norm{f_k}_{L^2}^2\\ \notag
		&\lesssim  \lambda^{-\frac{3}{2}}K^4\sum_{\abs{j-k}\geq 2}\frac{1}{\abs{j-k}}\norm{f_j}_{L^4}^2\norm{f_k}_{L^4}^2\\\notag
		&\leq \lambda^{-\frac{3}{2}}K^4\left(\sum_{\abs{j-k}\geq 2}\frac{1}{\abs{j-k}^2}\right)^{\frac{1}{2}}\left(\sum_{\abs{j-k}\geq 2}\norm{f_j}_{L^4}^4\norm{f_k}_{L^4}^4\right)^{\frac{1}{2}}\\
		&\lesssim \lambda^{-\frac{3}{2}}K^4\norm{f}_{L^4}^4.\label{Bro-Est}
\end{align}
Thus, \eqref{Bro-Est} together with \eqref{Nar-Est} implies
\begin{equation*}
		Q_4^4(\lambda)\lesssim K^4\lambda^{-\frac{3}{2}}+K^{-2}Q_4^4(\lambda/K).
\end{equation*}
This leads to Lemma \ref{Main-Lemm}. Thus we complete the proof of Theorem \ref{Main-Thm}.

\section{Acknowlegement}
The author thanks Prof. Xiaochun Li for sharing the idea of using induction machinery to deal with degenerate oscillatory integral operators when the author visited University of Illinois at Urbana-Champaign during 2017-2018. The author would like to acknowledge financial support from Jiangsu Natural Science Foundation, Grant No. BK20200308. The author shows great gratitude to Ran Xu, Xiaonan Hu and Jie Tang for their constant encouragement.


\end{document}